\newcommand{\C}{{\mathbb C} }
\newcommand{\cA}{{\mathcal A} }
\newcommand{\cB}{{\mathcal B} }
\newcommand{\cE}{{\mathcal E} }
\newcommand{\cM}{{\mathcal M} }
\newcommand{\cN}{{\mathcal N} }
\newcommand{\cO}{{\mathcal O} }
\newcommand{\cX}{{\mathcal X} }
\newcommand{\wt}{\widetilde}
\newtheorem{theorem}{Theorem}[section]
\newtheorem{proposition}[theorem]{Proposition}
\theoremstyle{definition}
\newtheorem{definition}[theorem]{Definition}
\newtheorem*{fact*}{Fact}
\numberwithin{equation}{section}
\def\ul#1{\underline{#1}}
\def\ks{Ko\-dai\-ra-Spen\-cer }
\def\C{\mathbb{C}}
\def\isom{\stackrel{\sim}{\longrightarrow}}
\def\cage{Car\-tan geometry}
\def\cages{Car\-tan geometries}
\def\fg{\mathfrak g}
\def\fh{\mathfrak h}
\title[Deformation theory of holomorphic Cartan geometries]{Deformation theory of
holomorphic Cartan geometries}
\author[I.~Biswas]{Indranil Biswas}
\address{School of Mathematics, Tata Institute of Fundamental Research, Homi Bhabha Road, Mumbai 400005, India}
\email{indranil@math.tifr.res.in}
\author[S.~Dumitrescu]{Sorin Dumitrescu}
\address{Universit\'e C\^ote d'Azur, CNRS, LJAD, France}
\email{dumitres@unice.fr}
\author[G.~Schumacher]{Georg Schumacher}
\address{Fachbereich Mathematik und Informatik,
Philipps-Universit\"at Marburg, Lahnberge, Hans-Meerwein-Strasse, D-35032
Marburg, Germany}
\email{schumac@mathematik.uni-marburg.de}
\subjclass[2010]{32G13, 53C55}
\keywords{Deformation theory, holomorphic Cartan geometry, Atiyah bundle.}
\begin{document}

\begin{abstract}
We introduce the deformation theory of holomorphic Cartan geometries. The infinitesimal
automorphisms, as well as the infinitesimal deformations, of
holomorphic Cartan geometries are computed. We also prove the existence of a semi-universal
deformation of a holomorphic Cartan geometry.
\end{abstract}

\maketitle

\section{Introduction}

Let $G$ be a complex Lie group and $H \,<\, G$ a complex Lie subgroup with Lie algebras $\fg$, and $\fh$ respectively. The quotient
map $G\,\longrightarrow\, G/H$ defines a holomorphic principal $H$-bundle. Moreover, on the total space of this principal
bundle, namely $G$, we have a tautological $\fg$--valued holomorphic $1$-form (the Maurer-Cartan form), which is constructed by
identifying $\fg$ with the right-invariant vector fields. This $1$-form is an isomorphism of the holomorphic tangent bundle
of $G$ with the trivial vector bundle $G\times\fg\, \longrightarrow\, G$.
The restriction of this form to the fibers of the projection $G\,\longrightarrow\, G/H$ coincide with the
$\fh$-valued Maurer-Cartan form for the right-action of $H$ on the fiber.

A holomorphic \cage\ of type $(G,\, H)$ on a
compact complex manifold $X$ is infinitesimally modeled on the above set-up. More precisely,
a holomorphic \cage\ of type $(G,\, H)$ on $X$ consists of
\begin{itemize}
\item a holomorphic principal
$H$-bundle $E_H$ over $X$,

\item a holomorphic $1$-form $A$ on $E_H$ with values in the Lie algebra $\fg$
of $G$ that induces a holomorphic isomorphism from the holomorphic tangent bundle of $E_H$ to the trivial vector bundle
on $E_H$ with fiber $\fg$. This isomorphism is required to be $H$-invariant and on each fiber
of $E_H$ it should be the Maurer-Cartan form for the action of $H$.
\end{itemize}
(see Definition \ref{de:Cartangeom} in Section \ref{s2} and for more details \cite{Sh}). The notion
of holomorphic \cage\ extends to Sasakian manifolds \cite{BDS}.

A fundamental result of E. Cartan shows that the obstruction for $A$ to satisfy the Maurer-Cartan equation of $G$ is a curvature tensor which 
vanishes if and only if $(E_H,\, A)$ is locally isomorphic (not just infinitesimally) to the $H$-principal bundle $G \, \longrightarrow\, G/H$ 
endowed with the Maurer-Cartan form \cite{Sh}. In this case $X$ admits local coordinates with values in $G/H$ which are well defined up to the 
canonical action of $G$ on $G/H$. Indeed, Ehresmann proved that a flat Cartan geometry is equivalent with the following data: a holomorphic 
principal $G$-bundle over $X$ endowed with a flat holomorphic connection and a $H$-subbundle transverse to the flat connection \cite{Eh} (see also 
the survey \cite{BD1}). This implies that the pull-back of this $G$-principal bundle to the universal cover $\tilde{X}$ of $X$ is isomorphic to 
$\tilde X \times G$ and the $H$-subbundle is given by a holomorphic map $\widetilde{X} \,\longrightarrow\, G/H$. Moreover, the transversality 
condition is equivalent to 
the fact that the previous map is a local biholomorphism, traditionally called the developing map of the (flat) Cartan geometry. The developing map 
is equivariant with respect to the action of the fundamental group $\pi_1(X)$ of $X$ by deck transformations on $\tilde{X}$ and through the 
monodromy morphism $\rho \,:\, \pi_1(X) \,\longrightarrow\, G$ (of the $G$-flat bundle) on the model space $G/H$ \cite{Eh} (see also \cite{BD1}). 
Ehresmann's 
geometrical description leads to the following nice and useful description of the deformation space of flat Cartan geometries with given model type 
$(G,\,H)$ on the (real) manifold $X$. This so-called Ehresmann-Thurston principle ensures that the map associating to each flat Cartan geometry its 
monodromy morphism $\rho \,:\, \pi_1(X)\, \longrightarrow\, G$ (uniquely defined up to inner conjugacy in $G$) is a local homeomorphism between the 
deformation space of flat Cartan geometries with model $(G,\,H)$ on $X$ and the space of group homomorphisms from $\pi_1(X)$ to $G$ \cite{Go}.

Since $G$ is a complex Lie group and $H$ a closed complex subgroup in $G$, the model manifold $G/H$ inherits a 
$G$-invariant complex structure. Any flat Cartan geometry with model $(G,\,H)$ induces on $X$ an underlying 
complex structure. Hence there is a natural forgetful map from the deformation space of flat Cartan geometries 
with model complex Lie groups $(G,\,H)$ into the Kuranishi space of $X$. In the particular case of complex 
projective structures on Riemann surfaces, this map played a major role in the understanding of the 
uniformization theorem for Riemann surfaces (see \cite{Gu} or \cite{StG}). More precisely, the uniformization 
theorem asserts the existence on any Riemann surface of a compatible complex projective structure with injective 
developing map. In the case where the Riemann surface is compact of genus $g \geq 2$, the developing map is an 
isomorphism between the universal cover of the surface and the unitary disk in $\mathbb C$ and the image of the 
monodromy morphism is a uniform lattice in ${\rm PSL}(2, \mathbb R)$ (the isometry group of the hyperbolic disk) 
\cite{StG}.

More recently the deformation space of flat Cartan geometries with model \linebreak $G\,=\,{\rm SL}(2, 
\mathbb C) \times {\rm SL}(2, \mathbb C)$ and $H\,=\,{\rm SL}(2, \mathbb C)$, diagonally embedded, was used 
by Ghys in \cite{Gh} in order to compute the Kuranishi space of the parallelizable manifolds ${\rm SL}(2, 
\mathbb C) / \Gamma$, with $\Gamma$ a uniform lattice in ${\rm SL}(2, \mathbb C)$. It was proved in \cite{Gh} 
that the deformation space of those flat Cartan geometries is locally isomorphic to the Kuranishi space and 
modeled on the germ at the trivial morphism in the algebraic variety of group homomorphisms (representations) 
from $\Gamma$ into ${\rm SL}(2, \mathbb C)$. In particular, for any uniform lattice $\Gamma$ with positive 
first Betti number this germ has positive dimension. Hence the corresponding parallelizable manifolds ${\rm 
SL}(2, \mathbb C) / \Gamma$ admit nontrivial deformations of the underlying complex structure. Those examples 
of flexible parallelizable manifolds associated to semi-simple complex Lie groups are exotic (the unique case 
not covered by Raghunathan's rigidity results \cite{Ra} is precisely that of a factor locally isomorphic to 
${\rm SL}(2, \mathbb C)$).

Let us mention that the deformation theory of representations of fundamental groups of compact K\"ahler 
manifolds into real algebraic groups $G$ was worked out in \cite{GM}. Under various conditions on a given 
representation, it was proved in \cite{GM} that there exists a neighborhood of it in the algebraic variety of 
representations which is analytically equivalent to a cone defined by homogeneous quadratic equations. This is 
the case in the neighborhood of the monodromy morphism of a variation of Hodge structure. It is also the case 
for the monodromy of a flat principal bundle $G$ which admits a reduction to a compact subgroup $K$ such that 
the quotient $G/K$ is a Hermitian symmetric space.

Our aim here is to introduce the deformation theory of holomorphic (not necessarily flat) \cages\ on a compact 
complex manifold. We compute the tangent cohomology of a holomorphic \cage\ $(E_H,\,A)$ in degree zero and one. 
Infinitesimal automorphisms of a holomorphic \cage\ $(E_H,\,A)$ consist of all the $ad(E_H)$-valued holomorphic 
vector fields $\phi$ satisfying the condition that the Lie derivative $L_\phi(A)$ vanishes. The space of 
infinitesimal deformations of $(E_H,\,A)$ fits into a short exact sequence that we construct in the fourth 
section. The construction of the semi-universal deformation is worked out in the last section. Using the same 
methods a semi-universal deformation can also be constructed when the principal $H$-bundle and the underlying 
compact complex manifold are both moving.

While we have restricted ourselves to the holomorphic category, it should be mentioned that Cartan geometries 
are also defined in $C^\infty$ category. However in the $C^\infty$ category, the infinitesimal deformations of a 
Cartan geometry are not parameterized by a finite dimensional space.

Our project in the future is to use the deformation theory of holomorphic \cages\ on a compact complex manifold developed here in order to 
investigate the Kuranishi space of a compact complex manifold bearing a holomorphic Cartan geometry. Hopefully this will led to a better 
understanding of the classification of compact complex manifolds admitting holomorphic Cartan geometries and to uniformization theorems for those
in higher dimension. It should be mentioned that Inoue, Kobayashi and Ochiai proved that compact complex surfaces bearing holomorphic affine 
connections (respectively, holomorphic projective connections) also admit flat holomorphic affine connections (respectively, flat holomorphic 
projective 
connections) with corresponding injective developing map. In particular, those complex surfaces are uniformized as quotients of open subsets in the 
complex affine plane (respectively, complex projective plane) by a discrete subgroup of affine transformations (respectively, projective 
transformations) 
acting 
properly and discontinuously \cite{IKO, KO1,KO2}. We aim to prove, as a generalization of those results, that compact complex surfaces bearing holomorphic Cartan geometries with model $(G,\,H)$ also admit flat holomorphic
Cartan geometries with model $(G,\,H)$ and with corresponding injective developing map into $G/H$. This would uniformize compact complex surfaces bearing holomorphic Cartan geometries with model $(G,\,H)$ as compact quotients of open subsets $U$ in $G/H$ by discrete subgroups in $G$ preserving $U \subset G/H$ and acting properly and discontinuously on $U$.

\section{Deformations of holomorphic Cartan geometries -- Definition}\label{s2}

For any complex manifold $M$ its holomorphic tangent bundle will be denoted by $T_M$. We
shall denote by $G$ a connected complex Lie group, and by $H\,<\,G$ a closed connected complex Lie subgroup. Furthermore fix a compact complex manifold $X$. Let $E_H$ denote a holomorphic principal $H$-bundle over $X$. Let $E_G\,= \,E_H\times_H G$ be the holomorphic principal $G$-bundle over $X$ obtained by extending the structure group of $E_H$ using the inclusion of $H$ in $G$.

The action of the group $H$ on $E_H$ produces an action of $H$ on the tangent bundle $T_{E_H}$. In other words, for $p \,\in\, E_H$, a tangent vector $v\,\in\, T_p(E_H)$ and $h\,\in\, H$, we have $v\cdot h \,=\, R_{h*}v$, where $R_h$ is the right
multiplication by $h$. For $\gamma\,\in\, \fg\,=\, {\rm Lie}(G)$ we have $h\cdot \gamma\,=\, ad(h^{-1}(\gamma))$.

Let $\pi\,:\,E_H\,\longrightarrow\, X$ be the projection from the total space of the
principal $H$-bundle $E_H$, and let $\pi_* \,:\, T_{E_H} \,\longrightarrow\, T_X$ the corresponding induced map of tangent
bundles. The adjoint bundle $ad(E_H)$ is defined to be the associated vector bundle $E_H\times_H \fh$, where $\fh \,=\, {\rm Lie}(H)$.
The space of vertical tangent vectors $\ker \pi_*$ is invariant under the action of $H$, and we have
\begin{equation}\label{a1}
ad(E_H)\,=\, ker(\pi_*)/H\, .
\end{equation}
Given a section of $ad(E_H)$ we shall use the same notation for its pull-back to a section of $\ker(\pi_*)
\,\subset \,T_{E_H}$.

\begin{definition}\label{de:Cartangeom}
A {\it holomorphic Cartan geometry} of type $(G,\,H)$ on $X$ is a pair $(E_H,\,A)$, where
$E_H$ is a holomorphic principal $H$-bundle on $X$, and $A$ is a
$\C$--linear holomorphic isomorphism of vector bundles
$$
A\,:\,T_{E_H}\,\isom\, E_H \times \fg
$$
\vskip-3mm
over $E_H$ such that
\begin{itemize}
\item[(i)] $A$ is $H$--equivariant, and
\item[(ii)] the restriction of $A$ to the fibers of $E_H\,\longrightarrow\, X$ coincides with the Maurer-Cartan form
on the fiber for the action of $H$.
\end{itemize}

We note that the above isomorphism $A$ induces an isomorphism
\begin{equation}\label{eq:At_ad}
A_H\,:\, T_{E_H}/H \,\isom\, (E_H\times \fg)/H\, ,
\end{equation}
where $T_{E_H}/H$ is, by definition, the Atiyah bundle $At(E_H)$, which fits into the Atiyah exact sequence
$$
0\,\longrightarrow\, ad(E_H) \,\longrightarrow\, At(E_H) \,\longrightarrow\, T_X \,\longrightarrow\, 0
$$
(see \cite{At}). We also have $(E_H\times \fg)/H \,\simeq\, ad(E_G)$, where the quotient is for
the conjugation action mentioned earlier.

The isomorphism $A_H$ in \eqref{eq:At_ad} induces a holomorphic connection on $E_G$
\cite{Sh}, \cite[(2.8)]{BD}, which in turn produces a holomorphic connection on $ad(E_G)$.
Therefore, we have a holomorphic differential operator
\begin{equation}\label{eq:Carconn}
D\,:\, ad(E_G) \,\longrightarrow\, \Omega^1_X \otimes ad(E_G)
\end{equation}
of order one.

An {\it isomorphism} $\Phi\,:\,(E_H,\,A)\,\longrightarrow\, (F_H,\,B)$ between holomorphic Cartan geometries $(E_H,\,A)$ and $ (F_H,\,B)$ of same type $(G, \,H)$ on $X$
is given by a holomorphic isomorphism $\phi\,:\,E_H \,\longrightarrow\, F_H$ of principal $H$-bundles that takes $A$ to
$B$ so that
\begin{equation}\label{eq:iso}
\xymatrix{ &T_{E_H} \ar[r]^A \ar[dl] \ar[dd]_{\phi_*} & E_H \times \fg \ar[dd]^{\phi\times id_\fg}\\X & & \\& T_{F_H} \ar[lu]\ar[r]^B & F_H \times \fg
}
\end{equation}
is a commutative diagram (where $\phi_*$ denotes the differential of $\phi$).
\end{definition}

Our aim is to define deformations of holomorphic Cartan geometries. 

Let us first define families of Cartan geometries of type $(G, \,H)$ on a given complex manifold $X$
and isomorphisms between families.

\begin{definition}\label{de:HolFam}
Let $H<G$ be a closed connected complex subgroup of a complex Lie group, and $S$ a complex space.
\begin{itemize}
\item[(i)]
A {\em holomorphic family} of principal $H$-bundles with
holomorphic Cartan geometries over $S$ (also called {\it holomorphic family of Cartan geometries})
is a pair $(\cE_H,\cA)$, where $\cE_H$ is a principal $H$-bundle over $X\times S$ together with a linear isomorphism $\cA$ over $S$
\begin{equation}\label{eq:famCg}
\xymatrix{T_{\cE_H/S} \ar[r]^\cA_\sim \ar[rd] & \cE_H\times \fg \ar[d]\\ & S}
\end{equation}
such that all restrictions $\cA_s$ of $\cA$ to fibers $T_{\cE_{H,s}}$ over $s\in S$ define holomorphic Cartan geometries $(T_{\cE_{H,s}}, \,\cA_s)$ of type $(G, \,H)$ on $X$.
\item[(ii)]
An {\it isomorphism} $\Xi\,:\, 
(\cE_{H,1},\,\cA_1) 
 \,\longrightarrow\, (\cE_{H,2},\,\cA_2)$ between holomorphic families of Cartan geometries $(\cE_{H,1},\,\cA_1)$ and $(\cE_{H,2},\,\cA_2)$ over $S$ is given by an isomorphism $\xi : \cE_{H,1} \,\isom \,\cE_{H,2}$ of principal $H$-bundles such that $(\xi \times id_\fg)\circ \cA_1\,=\, 
\cA_2\circ T_\xi$, with $T_\xi \,:\, T \cE_{H,1} \,\isom\, T \cE_{H,2}$ being the differential of $\xi$.
\end{itemize}
\end{definition}

Let us now to define deformations (over complex spaces) of a holomorphic Cartan geometry and the corresponding
notion of isomorphism between deformations. We also define below (germs of) deformations (over germs of complex spaces) of a Cartan geometry.

\begin{definition}\label{de:holfam}
Let $(E_H,\, A)$ be a holomorphic Cartan geometry of type $(G, \,H)$ on $X$.

\begin{itemize}
\item[(i)] Let $(S,\,s_0)$ be a complex space with a distinguished point $s_0\,\in\, S$. A {\em deformation} of $(E_H,\, A)$ over $(S,\,s_0)$ is a 
pair $((\cE_H,\, \cA), \,\Phi)$, where
\begin{itemize}
\item[(a)]$(\cE_H,\,\cA)$ is a holomorphic family of holomorphic Cartan geometries over $S$, and

\item[(b)] $\Phi\,:\, (E_H,\,A) \,\isom\, (\cE_{H,s_0}, \,\cA_{s_0})$ is an isomorphism of holomorphic Cartan geometries.
\end{itemize}

An {\it isomorphism} $((\cE_{H1},\, \cA_1),\, \Phi_1) \isom ((\cE_{H1},\, \cA_2),\, \Phi_2)$ between 
deformations $((\cE_{H1},\, \cA_1),\, \Phi_1)$ and $((\cE_{H1},\, \cA_2),\, \Phi_2)$ of the Cartan geometry 
$(E_H,\, A)$ is given by an isomorphism of families $\Xi\,:\, (\cE_{H,1},\,\cA_1) \,\longrightarrow\, 
(\cE_{H,2},\,\cA_2)$ such that $\Xi \circ \Phi_1 \,=\, \Phi_2$.

\item[(ii)] Let $\ul S$ be a germ of a complex space represented by a complex space $(S,s_0)$ with a distinguished point $s_0\in S$. A {\it (germ of)
deformation} $((\ul\cE_H,\,\ul\cA), \,\ul\Phi)$ of $(E_H,\,A)$ over $\ul S$ is an equivalence class of deformations of $(E_H,\,A)$ over complex spaces with 
distinguished point $s_0$. More precisely, a (germ) of deformation over $\ul S$ is represented by a deformation of $(E_H,\,A)$ over a neighborhood $S_1\,\subset\, S$ of $s_0$. A further deformation over 
an open neighborhood $s_0\,\in\, S_2\,\subset\, S$ is equivalent, if there exists a neighborhood $s_0\,\in \,S_3 \,\subset \,S_1\cap S_2$ and an
isomorphism of the restrictions to $S_3$ in the sense of (i).
\end{itemize}
\end{definition}

\section{Deformations of holomorphic Cartan geometries}

The tangent cohomology of a $H$-principal bundle $E_H$ is equal to $H^{\bullet}(X, \,ad(E_H))$, where 
$ad(E_H)$ is the adjoint bundle of $E_H$ with fiber $\fh$ \cite{Don} (see also \cite{BHH}). In particular in 
degrees $0$, $1$, and $2$ we have infinitesimal automorphisms, infinitesimal deformations, and the space 
containing obstructions respectively.

We deal now with the tangent cohomology for holomorphic Cartan geometries.

\subsection{Infinitesimal automorphisms of holomorphic Cartan geometries}

Let $(E_H,\,A)$ be a holomorphic \cage\ of type $(G, \,H)$ on $X$.

We deal first with the tangent cohomology $T^0(E_H,\,A)$ of degree zero for the holomorphic Cartan geometry 
$(E_H, \, A)$. Recall that $A$ takes values in $\mathfrak g$. From \eqref{a1} it follows that any holomorphic 
section $\psi$ of $ad(E_H)$ over $U\, \subset\, X$ gives rise to a $H$--invariant holomorphic vector field 
$\widetilde\psi$ over $E_H\vert_U$ which is vertical for the projection $\pi$. We shall identify $\psi$ with 
$\widetilde\psi$. By $L_\psi$ we denote the Lie derivative with respect to this vector field 
$\widetilde\psi$.

\begin{proposition}\label{pr:infaut}
The space of infinitesimal automorphisms is equal to
\begin{equation}\label{eq:infaut}
T^0(E_H,\,A) \,= \,H^0(X, \,ad(E_H))_A\,= \,\{\psi \,\in\,H^0(X, \,ad(E_H))\,\mid\, L_\psi(A)\,=\,0\}\, .
\end{equation}
\end{proposition}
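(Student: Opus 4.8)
The plan is to identify the tangent cohomology $T^0(E_H,\,A)$ with the space of infinitesimal automorphisms of the pair $(E_H,\,A)$, and then to compute it directly. An infinitesimal automorphism is a derivative at $t=0$ of a one-parameter family of automorphisms of $(E_H,\,A)$, equivalently an automorphism of the trivial deformation over the dual numbers $\mathrm{Spec}\,\C[\epsilon]/(\epsilon^2)$ that restricts to the identity over the closed point. First I would recall that for the bare principal bundle $E_H$, such an infinitesimal automorphism is exactly a holomorphic section $\psi$ of $ad(E_H) = \ker(\pi_*)/H$, realized (as the excerpt sets up) as an $H$-invariant vertical holomorphic vector field $\widetilde\psi$ on $E_H$; the flow of $\widetilde\psi$ gives a one-parameter group of bundle automorphisms. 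So the underlying-bundle part of $T^0(E_H,\,A)$ is $H^0(X,\,ad(E_H))$, and the only extra condition is compatibility with $A$ in the sense of the commuting square \eqref{eq:iso}.

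Next I would translate that compatibility into the stated differential condition. If $\{\Phi_t\}$ is the flow of $\widetilde\psi$, then $\Phi_t$ is an automorphism of the Cartan geometry precisely when $(\Phi_t\times\mathrm{id}_\fg)\circ A = A\circ\Phi_{t*}$ for all $t$, i.e. when the pull-back $\Phi_t^*A$ equals $A$ (here one uses that $\widetilde\psi$ is vertical and $H$-invariant, so the induced map on $E_H\times\fg$ is just $\Phi_t$ on the base and the identity on the $\fg$-factor, and hence $\Phi_t^*A$ makes sense as a $\fg$-valued $1$-form compared with $A$). Differentiating at $t=0$ gives $L_{\widetilde\psi}(A) = 0$, that is $L_\psi(A)=0$; conversely, since $\widetilde\psi$ is a complete-along-fibers vertical field whose flow preserves $E_H$, the vanishing of $L_\psi(A)$ integrates back to $\Phi_t^*A=A$ for all $t$ because $\frac{d}{dt}\Phi_t^*A = \Phi_t^*(L_{\widetilde\psi}A)=0$. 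This shows the second equality in \eqref{eq:infaut}, namely that $T^0(E_H,\,A)$ consists of those $\psi\in H^0(X,\,ad(E_H))$ with $L_\psi(A)=0$.

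The one genuine point to check carefully — which I expect to be the main (if modest) obstacle — is that there are no infinitesimal automorphisms beyond those coming from sections of $ad(E_H)$: a priori an automorphism of the Cartan geometry is only required to be an automorphism of $E_H$ over $X$, and one must make sure that in the infinitesimal setting every such $\psi$ is automatically vertical, so that the identification with $\ker(\pi_*)/H$ in \eqref{a1} applies and $L_\psi(A)$ is well-defined as a section of $\Omega^1_{E_H}\otimes\fg$ that descends. This is exactly the content of the fact, recalled from \cite{Don,BHH}, that the tangent cohomology of the principal bundle $E_H$ in degree zero is $H^0(X,\,ad(E_H))$: an infinitesimal automorphism of $E_H$ covering the identity of $X$ is a vertical $H$-invariant vector field. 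With that in hand the Cartan-geometry condition is the single additional linear equation $L_\psi(A)=0$, and since $L_{\,\cdot\,}(A)$ is $\C$-linear in $\psi$ the resulting set is a linear subspace, as asserted. Finally I would note that because the flow $\Phi_t$ is defined for all $t$ (the field is vertical and tangent to the compact fibers $H$), the infinitesimal statement $L_\psi(A)=0$ is equivalent to the integrated statement "$\Phi_t$ is an automorphism of $(E_H,\,A)$ for all $t$", closing the identification of \eqref{eq:infaut} with the honest automorphism-theoretic definition of $T^0$.
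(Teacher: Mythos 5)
Your proof is correct and takes essentially the same route as the paper: identify the degree-zero tangent cohomology of the underlying bundle with $H^0(X,\,ad(E_H))$ (vertical $H$-invariant holomorphic vector fields on $E_H$), then observe that compatibility with $A$ is the single linear condition $L_\psi(A)\,=\,0$; the paper obtains this purely infinitesimally from the commuting square \eqref{eq:iso} via the Leibniz rule $\psi(A(v))\,=\,L_\psi(A)(v)+A(L_\psi(v))$, while you differentiate $\Phi_t^*A\,=\,A$ along the flow of $\widetilde\psi$ at $t=0$, which is the same computation. One small slip in a non-essential remark: the fibers of $E_H\,\longrightarrow\,X$ are copies of $H$, which is in general \emph{not} compact, so completeness of the flow cannot be deduced from compactness of the fibers; it does hold, because the flow of $\widetilde\psi$ is the global gauge transformation $p\,\longmapsto\, p\cdot\exp(t\psi(p))$, but in any case the integrated statement is not needed for the proposition, which only concerns the infinitesimal condition.
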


\begin{proof}
We consider \eqref{eq:iso} for $F_H\,=\,E_H$ and the infinitesimal action of an element $\psi\in H^0(X,\, ad(E_H))$ of the group of
vertical infinitesimal automorphisms of $E_H$. This gives rise to the following digram of homomorphisms on $E_H$:
\begin{equation}\label{eq:infaut1}
\xymatrix{
& T_{E_H} \ar[r]^A \ar[d]_{L_\psi} & E_H\times\fg \ar[d]^{\psi} \\& T_{E_H}
\ar[r]^A & E_H\times \fg
}
\end{equation}
Diagram \eqref{eq:infaut1} is interpreted as follows. As mentioned before, holomorphic
sections of $ad(E_H)$ are $H$-invariant vertical sections of the holomorphic tangent bundle
$T_{E_H}$. We apply $A$ to a holomorphic section $v$ of $T_{E_H}$; then $\psi$
is applied to $A(v)$, which is simply the Lie bracket on $\mathfrak g$ because
$\psi$ is a function on $E_H$ with values in
$\mathfrak h$. On the other hand, $\psi$ acts on holomorphic sections of $ad(E_H)$ by
applying the Lie derivative $L_\psi$, which is the infinitesimal version of the adjoint
action.

The infinitesimal automorphism $\psi$ is compatible with the holomorphic \cage, if
\eqref{eq:infaut} commutes for all sections $v$ of $ad(E_H)$, i.e.\
$$
A(L_\psi(v))\,=\,\psi(A(v))\,=\, L_\psi(A)(v) + A(L_\psi(v))
$$
for all $v$. Therefore, \eqref{eq:infaut} commutes if and only if $L_\psi(A)\,=\,0$.
\end{proof}

\subsection{Infinitesimal deformations of holomorphic Cartan geometries}

Let $\C[\epsilon]\,=\, \C[t]/(t^2)$, so that $\C[\epsilon]\,=\, \C\oplus \epsilon\cdot \C$ holds with $\epsilon^2 \,=\,0$. The space $D\,=\,(\{0\},\, \cO_D)$ is also called double point with $\cO_D \,=\,\C[\epsilon]$. The tangent space $T_{S,s_0}$ of an arbitrary complex space $S$ at a point $s_0\,\in\, S$ can be identified with the space of all holomorphic mappings $D \,\longrightarrow\, S$ such that the underlying point $0$ is mapped to $s_0$. 

An {\it infinitesimal deformation} is an isomorphism class of deformations over $D$ considered as a complex 
space, or equivalently a deformation over the induced space germ. In particular, an {\it infinitesimal 
deformation} of a Cartan geometry is a deformation over the germ of complex space represented by $D$ in the 
sense of Definition \ref{de:holfam} (ii).

Let $E_H$ be a principal $H$-bundle over $X$, and let $\cE_H$ be an infinitesimal deformation of $E_H$ given 
by an $H$-principal bundle over $X\times D$, whose restriction to $X$ (the fiber over the canonical base 
point) is equipped with an isomorphism to $E_H$ over $X$. We describe such an object in more detail in the 
following:

\begin{proposition}\label{pr:infdef}
An infinitesimal deformation $\cE_H$ of $E_H$ can be described in the following alternative ways.
\begin{itemize}
\item[(i)] The restriction map $\rho\,:\, \cE_H \,\longrightarrow\, E_H$ of $H$-principal bundles defines an affine $\fh$-bundle 
$\mathit{aff}(\cE_H)$ over $E_H$ 
with underlying vector bundle $ad(E_H)$, whose affine structure is determined by a cocycle from $H^1(X,\, ad(E_H))$ up to isomorphism over $E_H$.

\item[(ii)] Let $H_D\,=\, H\oplus \epsilon\, \fh$ be the Lie group defined by $\epsilon^2=0$ and the adjoint action. Then $\cE_H$ is an 
$H_D$-principal bundle on $X$ with an underlying $H$-bundle $E_H$.
\end{itemize}
The adjoint bundle $ad(\cE_H)$ taken from the $H_D$-bundle $\cE_H$ on $X$ possesses the Lie algebra $\fh_D\,=\,\fh \oplus\epsilon\,\fh$ (determined 
by $\epsilon^2=0$) as fiber.
\end{proposition}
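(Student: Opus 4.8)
The plan is to prove the three assertions of Proposition~\ref{pr:infdef} in sequence, establishing first the equivalence (i)$\Leftrightarrow$(ii) as descriptions of the same object, and then reading off the statement about the adjoint bundle. Throughout, the key point is the standard dictionary between infinitesimal deformations of a principal bundle and $H^1(X,\,ad(E_H))$, which is quoted at the start of Section~3.

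First I would treat (ii), since it is the most structural. One checks that $H_D\,=\,H\oplus\epsilon\,\fh$ is indeed a complex Lie group: the group law is $(h_1,\epsilon\xi_1)\cdot(h_2,\epsilon\xi_2)\,=\,(h_1h_2,\,\epsilon(\xi_1+ad(h_1)\xi_2))$, with inverse $(h,\epsilon\xi)^{-1}=(h^{-1},-\epsilon\,ad(h^{-1})\xi)$, and the $\epsilon^2=0$ relation makes this associative; this is precisely the group of $D$-points of $H$, i.e.\ the Weil restriction $\mathrm{Res}_{D/\mathrm{pt}}H$. An $H_D$-principal bundle $\cE_H$ on $X$ has an underlying $H$-bundle obtained via the quotient homomorphism $H_D\to H$, $(h,\epsilon\xi)\mapsto h$, whose kernel is the additive group $\epsilon\,\fh\cong\fh$; conversely, lifting the structure group of a fixed $E_H$ along the split extension $0\to\fh\to H_D\to H\to 0$ amounts exactly to giving a deformation of $E_H$ over $D$ that restricts to $E_H$ over the closed point. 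The obstruction/classification for such liftings is governed by $H^1(X,\,ad(E_H))$: one verifies that two $H_D$-lifts are isomorphic (as deformations fixing the restriction to $X$) if and only if their defining $1$-cocycles differ by a coboundary, so the isomorphism classes form a torsor modelled on $H^1(X,\,ad(E_H))$. This matches the general statement that the tangent cohomology of $E_H$ in degree one is $H^1(X,\,ad(E_H))$, cited from \cite{Don,BHH}.

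Next I would do (i) by translating (ii) into the language of affine bundles over $E_H$. Given the $H_D$-bundle $\cE_H$ with restriction map $\rho\,:\,\cE_H\to E_H$, the fibers of $\rho$ are torsors under $\epsilon\,\fh$; $H$-equivariance of this structure means that $\rho$ descends, fiberwise over $E_H$, to an affine bundle $\mathit{aff}(\cE_H)$ whose underlying vector bundle is the $H$-equivariant bundle with fiber $\fh$ on $E_H$, namely the pullback to $E_H$ of $ad(E_H)$ (equivalently $\ker\pi_*$ as recalled around \eqref{a1}). In local $H$-trivializations of $E_H$ the transition data of $\cE_H$ is of the form $g_{\alpha\beta}+\epsilon\,c_{\alpha\beta}$ with $g_{\alpha\beta}$ the cocycle of $E_H$ and $(c_{\alpha\beta})$ a $1$-cochain with values in $ad(E_H)$; the cocycle condition for $\cE_H$ forces $(c_{\alpha\beta})$ to be a $1$-cocycle, and a change of $H_D$-trivialization alters it by a coboundary. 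Hence the affine structure of $\mathit{aff}(\cE_H)$ over $E_H$ is classified, up to isomorphism over $E_H$, exactly by the class $[c]\in H^1(X,\,ad(E_H))$, which is the same class as in (ii). The two descriptions are therefore equivalent, each being a way to encode the gluing cocycle.

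Finally, the assertion about $ad(\cE_H)$: taking the $H_D$-bundle $\cE_H$ on $X$ and forming its adjoint bundle $ad(\cE_H)\,=\,\cE_H\times_{H_D}\fh_D$, the fiber is $\fh_D\,=\,\mathrm{Lie}(H_D)\,=\,\fh\oplus\epsilon\,\fh$ (the tangent algebra of $H_D$, the $\epsilon^2=0$ relation being inherited), with $H_D$ acting by the adjoint representation. Since the action of $H_D$ on $\fh_D$ factors appropriately, one reads off that $ad(\cE_H)$ is an extension of $ad(E_H)$ by $ad(E_H)$ (the $\epsilon$-summand), consistent with the bundle of algebras structure. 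The only step requiring care is the claim that isomorphism classes of deformations correspond bijectively to $H^1(X,\,ad(E_H))$ rather than merely mapping to it; I expect the verification of injectivity of this correspondence — i.e.\ that a $1$-cocycle which is a coboundary yields a deformation isomorphic to the trivial one by an isomorphism respecting $\Phi$ — to be the main routine obstacle, but it is entirely standard \v{C}ech cocycle bookkeeping once the group law on $H_D$ is fixed.
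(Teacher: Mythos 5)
Your proposal is correct and takes essentially the same route as the paper: both arguments reduce to the \v{C}ech description in which the transition functions of $\cE_H$ take the form $g_{ij}+\epsilon\,\tau_{ij}$ with $\{\tau_{ij}\}$ an $ad(E_H)$-valued $1$-cocycle, from which (ii) is read off by viewing these as $H_D$-valued cocycles and (i) by the induced affine transition rule $\eta_j\mapsto ad(g_{ij})(\eta_j)+\tau_{ij}$. The only difference is presentational: you package the key identification as the Weil-restriction/functor-of-points fact $H_D\,=\,H(\C[\epsilon])$, whereas the paper derives the same thing by hand, using $\epsilon^2=0$ to show the $\epsilon$-part of the transition data is a derivation, i.e.\ $\fh$-valued.
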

\begin{proof}
We need the proof for the fact that isomorphism classes of infinitesimal deformations of $E_H$ correspond to elements of $H^1(X, ad(E_H))$ \cite{Don} in the following version.

Let $\frak U=\{ U_i\}$ be an open covering of $X$ by contractible Stein subsets such that $E_H$ is defined by a cocycle $\{g_{ij}(x)\}$ of $H$-valued, holomorphic mappings on $U_{ij}=U_i\cap U_j$. Now an $H$-principal bundle on $X\times D$ with respect to the covering $U_i\times D$ is determined by a cocycle $\gamma_{ij}$ such that for $x\in U_{ij}$
$$
\wt \gamma_{ij}(x)\,:\, \cO_{H,g_{ij}(x)}\,\longrightarrow\, \cO_{U_{ij},x}\oplus\, \epsilon \, \cO_{U_{ij},x}
$$
with $\wt \gamma_{ij}(x) \,=\, \wt g_{ij}(x) + \epsilon\,\tau_{ij}(x)$. The condition $\epsilon^2\,=\,0$ implies
\begin{equation}\label{eq:Dad}
\wt \gamma_{ij}(x)(\varphi \cdot \psi) = (\varphi \cdot \psi)\circ g_{ij} + \epsilon\left(\tau_{ij}(\varphi)\cdot (\psi\circ g_{ij}) + \tau_{ij}(\psi)\cdot (\varphi\circ g_{ij})\right)
\end{equation}
for all functions $\varphi$ and $\psi$. Hence
$$
\tau_{ik}= L_{g_{ij}*}(\tau_{jk}) + R_{g_{jk}*}(\tau_{ij}),
$$
implies that $\tau_{ij}$ is a derivation, which can be readily written in terms of the Lie-algebra $\fh$ of right-invariant vector fields
$$
\tau_{ik}= ad(g_{ij})(\tau_{jk}) + \tau_{ij}.
$$
The cocycle condition for the transition functions of the $H$-principle bundle $\cE_H$ yields that $\{\tau_{ij}\}$ is an $ad(E_H)$-valued cocycle. Now we can set
$$
\gamma_{ij}\,=\, g_{ij} + \epsilon\, \tau_{ij}\,:\, U_{ij}\times D\,\longrightarrow\, H\, ,
$$
and interpret $\gamma_{ij}$ as a truncated power series in powers of $\epsilon$ with coefficients that are holomorphic on $U_{ij}$. Here $g_{ij}$ has values in $H$ and $\tau_{ij}$ is $\fh$-valued. It can be reinterpreted as an $H_D$-valued cocycle, which proves (ii).

On the other hand, fixing $E_H$ we apply the transition function to a section $\eta_i$ with values in $\fh$. Then the transition equation for an $\fh$-valued holomorphic section $\eta_i$ on $U_i$ is
$$
\eta_j \,\longmapsto\, ad(g_{ij})(\eta_j) + \tau_{ij}\, .
$$
This defines an affine $\fh$-bundle with underlying vector bundle $ad(E_H)$.
\end{proof}

\begin{proposition}\label{pr:affine} Let $(E_H,\, A)$ be a holomorphic \cage\ of type $(G,H)$ on $X$. Let 
${\cE_H}$ be an isomorphism class of infinitesimal deformations of $E_H$. Then the obstructions to extend the 
holomorphic \cage\ $A$ to $\cE_H$ are in $$ H^1(E_H,\, Hom(T_{E_H},\, E_H\times \fg))^H $$ or equivalently in 
$$ H^1(X,\, Hom(At(E_H),\, ad(E_G))\, . $$
\end{proposition}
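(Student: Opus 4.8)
The plan is to view an infinitesimal deformation $\cE_H$ of $E_H$ as a holomorphic principal $H$-bundle on $X\times D$ whose restriction to $X\times\{0\}$ is identified with $E_H$ (Proposition \ref{pr:infdef}), and to read ``extending the holomorphic \cage\ $A$ to $\cE_H$'' as producing an $\cO_D$-linear, $H$-equivariant isomorphism $\cA\colon T_{\cE_H/D}\isom\cE_H\times\fg$ as in \eqref{eq:famCg} that restricts over $X\times\{0\}$ to $A$ and whose fibrewise restrictions are Maurer-Cartan forms, so that $(\cE_H,\cA)$ is a holomorphic family in the sense of Definition \ref{de:HolFam}. First I would construct such a $\cA$ locally over the members of a good cover, then measure the failure of the local pieces to match on overlaps by a \v{C}ech $1$-cocycle, and finally identify the class of that cocycle with the asserted obstruction.

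For the local step, fix a cover $\{U_i\}$ of $X$ by contractible Stein opens over which $E_H$, and hence $\cE_H$, is trivial, and choose trivialisations of $\cE_H|_{U_i\times D}$ extending fixed trivialisations of $E_H|_{U_i}$ (these exist since $H^1(U_i,\,ad(E_H))=0$). In such a chart $A$ is a $\fg$-valued holomorphic $1$-form on $U_i\times H$, and its pull-back along the projection $U_i\times D\times H\to U_i\times H$ defines an $H$-equivariant homomorphism $\cA^{(i)}\colon T_{\cE_H/D}|_{U_i\times D}\to(\cE_H\times\fg)|_{U_i\times D}$; it reduces to $A$ modulo $\epsilon$, hence is an isomorphism over $\C[\epsilon]$, and it is fibrewise Maurer-Cartan. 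On $U_{ij}\times D$ the difference $\cA^{(i)}-\cA^{(j)}$ is $\cO_{X\times D}$-linear, $H$-equivariant and vanishes modulo $\epsilon$, hence equals $\epsilon\cdot\beta_{ij}$ for a unique $H$-equivariant $\cO_X$-linear map $\beta_{ij}\colon T_{E_H}|_{\pi^{-1}(U_{ij})}\to(E_H\times\fg)|_{\pi^{-1}(U_{ij})}$, i.e.\ a section of $Hom(T_{E_H},\,E_H\times\fg)^H$ over $U_{ij}$. By \eqref{a1}, \eqref{eq:At_ad}, $T_{E_H}/H=At(E_H)$ and $(E_H\times\fg)/H=ad(E_G)$, this is the same datum as a section of $Hom(At(E_H),\,ad(E_G))$ over $U_{ij}$, which (by descent along $E_H\to X$, $H$ being connected) also gives the stated identification of the two cohomology groups. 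On triple overlaps $\epsilon(\beta_{ij}+\beta_{jk})=(\cA^{(i)}-\cA^{(j)})+(\cA^{(j)}-\cA^{(k)})=\epsilon\beta_{ik}$, so $\{\beta_{ij}\}$ is a \v{C}ech $1$-cocycle and defines a class $\mathrm{ob}(\cE_H,A)\in H^1(X,\,Hom(At(E_H),\,ad(E_G)))$.

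It remains to see that this class is the obstruction. If $\{\cA^{(i)}\}$ is replaced by another system of local extensions, each new choice differs from the old by $\epsilon\mu_i$ with $\mu_i$ a section of the same sheaf over $U_i$, and $\{\beta_{ij}\}$ changes by the coboundary $\{\mu_i-\mu_j\}$; hence $\mathrm{ob}(\cE_H,A)$ depends only on $\cE_H$ (and $A$). A global extension $\cA$ exists precisely when the $\cA^{(i)}$ can be chosen to agree on overlaps, i.e.\ when there are local sections $\sigma_i$ with $\cA^{(i)}-\cA^{(j)}=\epsilon(\sigma_i-\sigma_j)$, in which case $\{\cA^{(i)}-\epsilon\sigma_i\}$ glue to the desired $\cA$; this holds if and only if $\mathrm{ob}(\cE_H,A)=0$. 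Moreover, when nonempty, the set of extensions is a torsor over $H^0(X,\,Hom(At(E_H),\,ad(E_G)))$.

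The point that needs care is the bookkeeping forced by condition (ii) of Definition \ref{de:Cartangeom}: the $\cA^{(i)}$, and the corrections $\sigma_i$, must respect the fibrewise Maurer-Cartan normalisation, so that each $\beta_{ij}$ in fact factors through $\pi^{*}T_X$ and $\mathrm{ob}(\cE_H,A)$ already lies in the image of $H^1(X,\,Hom(T_X,\,ad(E_G)))$ under the map induced by the projection in the Atiyah exact sequence; the stated proposition is the weaker (and cleaner) assertion. I expect the main work to lie in carrying out the comparison of the trivialised pictures over $U_{ij}$ through the deformed cocycle $\gamma_{ij}=g_{ij}+\epsilon\,\tau_{ij}$ of Proposition \ref{pr:infdef} and checking that $\beta_{ij}$ is genuinely $\cO_X$-linear and $H$-equivariant with no residual correction terms; the remaining verifications (isomorphism over $\C[\epsilon]$, cocycle property, independence of choices) are routine.
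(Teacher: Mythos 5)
Your proposal is correct and follows essentially the same route as the paper: construct local extensions of $A$ over a Stein cover, take differences on overlaps to obtain an $H$-invariant \v{C}ech $1$-cocycle with values in $Hom(T_{E_H},\,E_H\times\fg)^H$, identify this with $Hom(At(E_H),\,ad(E_G))$ by descent, and show the class is well defined and vanishes exactly when the local extensions can be corrected to glue. The only cosmetic difference is that you produce the local extensions from trivialisations of $\cE_H$ over $U_i\times D$, whereas the paper uses the local sections $\sigma_i$ of the affine bundle $\cE_H\to E_H$ coming from $H\subset H_D$ and the induced splittings $T\sigma_i$ of $T\rho$ — these amount to the same choice.
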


\begin{proof} We assume that an infinitesimal 
deformation $\cE_H$ of $E_H$ is given like in the Proposition~\ref{pr:infdef}. We use again the notation in 
the proof of Proposition~\ref{pr:infdef} and we consider the affine $\fh$-bundle \begin{equation*} \xymatrix{ 
\cE_H \ar[r]^\rho_\fh & E_H .
}
\end{equation*}
Now $H\subset H_D$ is a subgroup so that over $U_i$ there is a section $\sigma_i$ of $\rho$. In particular there is a $H$-equivariant section of the 
above affine bundle over $\pi^{-1}(U_i)$, where $\pi\,:\,\cE_H \,\longrightarrow\, X$ is the projection on $X$.

Furthermore we get an induced infinitesimal deformation of the tangent bundle $T_{E_H}$.

Now we assume that $(E_H,\, A)$ defines a holomorphic Cartan geometry. Altogether this gives rise to exact sequences on $E_H$
\begin{equation}\label{eq:infdefEH}
\xymatrix{0 \ar[r] &\epsilon\cdot \C \ar[r] \ar@{^{(}->}[d] &
 \C \oplus \epsilon\cdot \C \ar[r] \ar@{^{(}->}[d] & \C \ar[r] \ar@{^{(}->}[d] & 0\\ 0 \ar[r]& \epsilon\cdot T_{E_H} \ar[r]^{T\iota} & T_{\cE_H/D} \ar[r]^{T\rho}\ar@{-->}[d]^{\cA} & T_{E_H}\ar[r]\ar[d]^A_\sim & 0\\
&& {\cE_H}\times \fg \ar[r]_{\rho\times id_\fg}& E_H\times \fg& \: ,
}
\end{equation}
where the existence of an isomorphism $\cA$ has still to be discussed.

Note that the existence of a compatible morphism $\cA$ is equivalent to the existence of a holomorphic Cartan geometry over $D$ (there is only one fiber), which in turn defines an infinitesimal deformation of the given object.

Now we also use the induced splitting $T\sigma_i$ of $T\rho$ over $U_i$. Once the above splittings are fixed there exists an extension 
$\cA_i\,:\,T_{\cE_H/D}|\pi^{-1}(U_i) \,\isom\, E_H\times \fg |\pi^{-1}(U_i)$ of $A|\pi^{-1}(U_i)$. By $\cO_D$-linearity any such map is unique when 
restricted to the kernel of $T\rho|\pi^{-1}(U_i)$. We consider $\cA_i-\cA_j$ over $\pi^{-1}(U_{ij})$. We apply Proposition~\ref{pr:infdef}(i) and see 
that $\cA_i-\cA_j$ has values in $ad(E_H)\times \fg$. Hence it is induced by an $H$-invariant morphism $\cA_{ij}\,:\, T_{E_H}|\pi^{-1}(U_{ij}) 
\,\longrightarrow\,ad(E_H)\times \fg$ as $$ \cA_i\,=\, \cA_{ij}\circ T\rho + \cA_j
$$
over $U_{ij}$, where the addition is taken in the sense of the affine bundle structure $\cE_H\,\longrightarrow\, E_H$.

Suppose that $\cA_{ij}$ is a coboundary of the form $\cB_j-\cB_i|U_{ij}$, where $\cB_i\,:\,T_{E_H}|U_i \,\longrightarrow\, E_H|U_i\times\fg$ are 
linear and 
$H$-invariant. Then, using the affine structure of $\rho: \cE_H \stackrel{\fh}{\longrightarrow} E_H$ the morphisms $\cA_i$ can be changed into $\cA_i + \cB_i \circ T\rho$, so that these fit together, and define the desired (global) map $\cA$.

It can be verified immediately that the cohomology class of $\cA_{ij}$ is uniquely determined by the infinitesimal deformation of the principal $H$-bundle $E_H$ and the holomorphic \cage\ $A$. If we assume that a holomorphic deformation $(\cE_H,\,\cA)$ of the holomorphic Cartan geometry $(E_H,\,A)$ does exist, then $\cA$ is unique up to an $H$-invariant morphism $T_{E_H} \,\longrightarrow\, E_H \times \fg$.
\end{proof}

We denote by $H^1(X,\, ad(E_H))_A$ the space of isomorphism classes of infinitesimal deformations of $E_H$ such that the \cage\ $A$ of the central fiber can be extended, so $H^1(X,\, ad(E_H))_A\, \subset\, H^1(X, \,ad(E_H))$. Furthermore we denote by
$T^1(E_H,A)$ the space of isomorphism classes of infinitesimal deformations of $(E_H,A)$.

Now Proposition~\ref{pr:infaut} and Proposition~\ref{pr:affine} imply the following result.

\begin{theorem}[Infinitesimal deformations]
Let $(E_H,\,A)$ be a holomorphic \cage. Then there is an exact sequence
$$
0 \,\longrightarrow\, H^0(E_H,\, Hom(T_{E_H}, \,E_H\times \fg))^H\,\longrightarrow\, T^1(E_H,\,A)
\,\longrightarrow\, H^1(X,\, ad(E_H))_A\,\longrightarrow\, 0 \, ,
$$
where $H^0(E_H,\, Hom(T_{E_H}, \,E_H\times \fg))^H$ can be identified with $H^0(X,\,Hom(At(E_H),\, ad(E_G)))$.
\end{theorem}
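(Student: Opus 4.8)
The plan is to assemble the short exact sequence directly from the two previous propositions. The key observation is that an infinitesimal deformation of $(E_H,\,A)$ consists of two pieces of data: an infinitesimal deformation of the underlying bundle $E_H$ for which the Cartan form extends, and — once such an extension is chosen — the freedom in choosing it. Proposition~\ref{pr:affine} together with the closing remark of its proof tells us precisely that, given an infinitesimal deformation $\cE_H$ of $E_H$ whose class lies in $H^1(X,\, ad(E_H))_A$, the Cartan form $A$ does extend to an isomorphism $\cA$, and that any two such extensions differ by an $H$-invariant morphism $T_{E_H}\,\longrightarrow\, E_H\times\fg$, i.e.\ by an element of $H^0(E_H,\, Hom(T_{E_H},\, E_H\times\fg))^H$. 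This is exactly the data needed to define the two maps in the sequence.

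First I would define the right-hand map $T^1(E_H,\,A)\,\longrightarrow\, H^1(X,\, ad(E_H))_A$ as the forgetful map sending an isomorphism class of infinitesimal deformations of the pair $(E_H,\,A)$ to the underlying infinitesimal deformation of $E_H$; by Proposition~\ref{pr:affine} this lands in $H^1(X,\, ad(E_H))_A$, and by the definition of that subspace it is surjective. Next I would define the left-hand map: given an $H$-invariant morphism $\cB\,:\,T_{E_H}\,\longrightarrow\, E_H\times\fg$, one forms the trivial infinitesimal deformation $\cE_H \,=\, E_H\times D$ of the bundle together with the deformed Cartan form $A + \epsilon\,\cB$ (using $\epsilon^2=0$); this gives an element of $T^1(E_H,\,A)$ lying over $0\in H^1(X,\, ad(E_H))_A$. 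I would check this map is $\C$-linear and injective: if $A+\epsilon\,\cB$ is isomorphic, as a deformation of $(E_H,\,A)$, to the trivial one $A+\epsilon\cdot 0$, then an automorphism of $E_H\times D$ restricting to the identity on the central fiber is of the form $\mathrm{id}+\epsilon\,\psi$ with $\psi\in H^0(X,\,ad(E_H))$, and compatibility with the Cartan forms forces $\cB \,=\, L_\psi(A)$; hence the class of $\cB$ in the quotient $H^0(E_H,\, Hom(T_{E_H},\, E_H\times\fg))^H \big/ \{L_\psi(A)\}$ — but here I should instead simply note (using Proposition~\ref{pr:infaut}) that the relevant $H^0$ group to inject is the full $H^0(E_H,\, Hom(T_{E_H},\, E_H\times\fg))^H$ and the infinitesimal automorphisms $L_\psi(A)=0$ contribute nothing, so $A+\epsilon\cB$ trivial forces $\cB=0$ and injectivity holds.

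Then I would verify exactness in the middle. That the composite left-then-right map is zero is immediate, since deformations of the form $A+\epsilon\,\cB$ have trivial underlying bundle deformation. For the reverse inclusion, suppose a class in $T^1(E_H,\,A)$ maps to $0\in H^1(X,\, ad(E_H))_A$; then its underlying bundle deformation is trivial, so we may take $\cE_H = E_H\times D$, and the Cartan form is some extension $\cA$ of $A$, which by the last sentence of the proof of Proposition~\ref{pr:affine} differs from the product extension $A\oplus(A\circ T\rho)$ by an $H$-invariant morphism $\cB\,:\,T_{E_H}\,\longrightarrow\, E_H\times\fg$; hence the class comes from the left-hand map. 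Finally I would record the reinterpretation $H^0(E_H,\, Hom(T_{E_H},\, E_H\times\fg))^H \,\cong\, H^0(X,\,Hom(At(E_H),\, ad(E_G)))$, which follows by taking $H$-invariants of $Hom(T_{E_H},\, E_H\times\fg)$ fiberwise: $H$-invariant bundle maps $T_{E_H}\to E_H\times\fg$ descend to maps $T_{E_H}/H = At(E_H) \to (E_H\times\fg)/H = ad(E_G)$, exactly as in the passage from $A$ to $A_H$ in \eqref{eq:At_ad}.

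The main obstacle I anticipate is bookkeeping the affine-bundle structure from Proposition~\ref{pr:infdef}(i) carefully enough to make the splitting argument in the middle-exactness step rigorous: on overlaps the local extensions $\cA_i$ are only defined up to the affine ambiguity, and one must confirm that when the underlying cocycle $\{\tau_{ij}\}$ is a coboundary the resulting global $\cB$ is well defined independent of the trivialization — but this is precisely the content already extracted in the proof of Proposition~\ref{pr:affine}, so the work is really just to cite it in the right order rather than to prove anything genuinely new.
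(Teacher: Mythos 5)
Your overall architecture is the same as the paper's: the paper's proof is literally the one sentence that Propositions~\ref{pr:infaut} and \ref{pr:affine} imply the theorem, and your fleshing-out (forgetful map to $H^1(X,\,ad(E_H))_A$, surjective by definition of that subspace; fibre over $0$ described by extensions of $A$ to the trivial bundle deformation; identification of $H$-invariant morphisms $T_{E_H}\to E_H\times\fg$ with morphisms $At(E_H)\to ad(E_G)$ exactly as in \eqref{eq:At_ad}) is the intended assembly. The middle-exactness and surjectivity parts, and the final identification, are fine.

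There is, however, a genuine gap in your injectivity step, and it is one you noticed and then talked yourself out of. The isomorphisms of the trivial bundle deformation $E_H\times D$ that respect the marking of the central fibre are $\mathrm{id}+\epsilon\,\psi$ for \emph{arbitrary} $\psi\in H^0(X,\,ad(E_H))$, not only for those with $L_\psi(A)=0$; such an isomorphism carries $(E_H\times D,\,A)$ to $(E_H\times D,\,A+\epsilon\,L_\psi(A))$, restricting to the identity over $\epsilon=0$. Hence the kernel of your map $\cB\mapsto[A+\epsilon\,\cB]$ is the image of $\psi\mapsto L_\psi(A)$ inside $H^0(E_H,\,Hom(T_{E_H},\,E_H\times\fg))^H$, and the sentence ``the infinitesimal automorphisms $L_\psi(A)=0$ contribute nothing, so $A+\epsilon\cB$ trivial forces $\cB=0$'' is a non sequitur: the sections $\psi$ with $L_\psi(A)\neq 0$ are precisely the ones that threaten injectivity, and your own computation two sentences earlier ($\cB=L_\psi(A)$) exhibits them. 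What your construction actually proves is exactness of
$H^0(X,\,ad(E_H))\longrightarrow H^0(X,\,Hom(At(E_H),\,ad(E_G)))\longrightarrow T^1(E_H,\,A)\longrightarrow H^1(X,\,ad(E_H))_A\longrightarrow 0$,
where the first arrow is $\psi\mapsto L_\psi(A)$ and its kernel is $H^0(X,\,ad(E_H))_A=T^0(E_H,\,A)$ from Proposition~\ref{pr:infaut}; injectivity of the left-hand map as stated in the theorem needs the additional input that every global section of $ad(E_H)$ is an infinitesimal automorphism of $A$, i.e.\ that the image of $L_\bullet(A)$ vanishes. The paper's one-line proof does not supply this step either, so you have put your finger on a real subtlety; but as written your injectivity paragraph does not establish the claim, and you should either prove that the action $\cB\mapsto\cB+L_\psi(A)$ is trivial in your setting or state the sequence in the quotiented (or left-extended) form above.
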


\subsection{Semi-universal deformation of principal bundles}\label{se:supb}

The aim of Section \ref{semi-univ Cartan} is to prove the existence of a semi-universal
deformation of a holomorphic Cartan geometry.
We begin here the construction with a semi-universal deformation of a principal $H$-bundle $E_H$. 

Recall that a deformation over a complex space $S$ with base point
$s_0$ is given by a holomorphic family $\cE_H$ of principal $H$-bundles over $X\times S$, together with an isomorphism $\Xi
\,:\, E_H \,\longrightarrow\, \cE_H|X\times\{s_0\}$. 

{\em Semi-universality} amounts to the following conditions (cf.\ also Definition~\ref{de:holfam} for deformations over germs of complex spaces):
\begin{itemize}
\item[(i)] \textit{(Completeness)} for any deformation $\ul\xi$ of $E_H$ over a space
$(W,w_0)$, given by a holomorphic family $\cE_{H,W} \,\longrightarrow\, X\times W$ together with an
isomorphism of the above type, there is a base change morphism $f\,:\,(W,\,w_0)\,\longrightarrow\, (S,\,s_0)$ such
that (after replacing the base space with open neighborhoods of the base points, if necessary)
the pull back $f^*\ul\xi\,=\,(id_X \times f)^*(\cE_{H})$ and $\cE_{H,W}$ are isomorphic with
isomorphism inducing the identity map over $X\times\{w_0\}$.

\item[(ii)] Let $(W,\,w_0)\,=\,(D,\,0)$ be as in {\rm (i)}. Then any such base change $f$ is {\em uniquely determined} by the given deformation. The set of isomorphism classes of deformations over $(D,0)$ is called ``tangent space of the deformation functor'' or first tangent cohomology associated to the given deformation problem.
\end{itemize}
For the sake of completeness we mention the {\em \ks map}: 

Given a deformation $\ul\xi$ over
$(W,w_0)$ we identify a tangent vector $v$ of $W$ at $w_0$ with a holomorphic map $f\,:\,(D,\,0)\,\longrightarrow\,
(W,\,w_0)$. Then the \ks map $\rho\,:\,T_{W,w_0} \,\longrightarrow\, H^1(X,\,ad(E_H))$ maps $\ul\xi$ to the
isomorphism class of $f^*\ul\xi$. We already computed the tangent cohomologies of order zero
and one.

\section{Semi-universal deformation for holomorphic Cartan geometries}\label{semi-univ Cartan}

In this Section we prove the existence of a semi-universal deformation of a holomorphic Cartan geometry.

Let us first define a natural pull-back functor for holomorphic families and for morphisms between families.

\subsection{A pull-back functor for holomorphic families}\label{su:movb}

Let again $X$ be a compact complex manifold, $S$ a complex space. Let $\mathbf{An}_S$ be the category of complex analytic spaces over $S$. The objects of
$\mathbf{An}_S$ are complex spaces $W$ together endowed with holomorphic maps $W\,\longrightarrow\, S$ and the morphisms are compatible holomorphic mappings.

Denote by $\mathbf{Sets}$ the category of sets.
 
Now we define a {\it pull-back of families} functor $F$ from the category $\mathbf{An}_S$ to the category 
$\mathbf{Sets}$.
 
Choose $E\,,E'$ holomorphic vector bundles on $X\times S$. Consider the map
$$
F \,:\, \mathbf{An}_S\,\longrightarrow\, \mathbf{Sets}
$$
which assigns to each object $(f : W \to S) \,\in\, \mathbf{An}_S$ the set
$$F(f)\,=\,F(f:W\to S) \,=\, Hom((id_X\times f)^*E,\, (id_X\times f)^*E')\, . $$

\begin{proposition}\mbox{}
\begin{itemize}
\item[(i)] The map $F$ is a morphism of categories;
 
\item [(ii)] The functor $F$ is representable.
 
\end{itemize}
\end{proposition}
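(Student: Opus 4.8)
The plan is to reduce both assertions to a standard fact about relative Hom-sheaves for families of vector bundles over $X \times S$, together with flatness and base-change. First I would address (i): to say that $F$ is a morphism of categories (a functor), I need to specify its action on morphisms and check functoriality. Given a morphism $g : (f' : W' \to S) \to (f : W \to S)$ in $\mathbf{An}_S$, that is, a holomorphic map $g : W' \to W$ with $f \circ g = f'$, I get a natural map $(id_X \times f')^* E = (id_X \times g)^*(id_X \times f)^* E \to \cdots$ by pulling back homomorphisms: a section $\varphi \in Hom((id_X \times f)^*E,\, (id_X \times f)^*E')$ is sent to $(id_X \times g)^*\varphi$. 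Compatibility of pullback with composition of morphisms and with identities is formal, so $F$ is a contravariant functor; this step is routine and I would present it briefly.

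The substance is in (ii), representability. The key observation is that $Hom((id_X\times f)^*E,\, (id_X\times f)^*E')$ equals the space of sections over $W$ of $(id_X \times f)^* \underline{Hom}(E, E')$ pushed down along the projection $X \times W \to W$, i.e.\ $H^0(X \times W,\, (id_X\times f)^*\cH)$ where $\cH := \underline{Hom}(E, E') \cong E^\vee \otimes E'$ is a holomorphic vector bundle on $X \times S$. So I would set $p : X \times S \to S$ for the projection and consider the coherent sheaf $p_* \cH$ together with its higher direct images. The standard way to represent the functor $W \mapsto H^0(X \times W, (id_X\times f)^*\cH) = H^0(W, (f^*\text{ of the relative } H^0))$ is via a linear fiber space: since $X$ is compact, Grauert's direct image and base-change theory applies, and one constructs a coherent sheaf on $S$ whose associated linear space (in the sense of the symmetric algebra / Grothendieck's $\mathbb{V}(\cdot)$ construction) represents $F$. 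Concretely, there is a coherent $\cO_S$-module $\cG$ such that for every $f : W \to S$ one has a functorial isomorphism $H^0(X \times W, (id_X\times f)^*\cH) \cong Hom_{\cO_W}(f^*\cG,\, \cO_W)$, and then the linear fiber space $\mathbb{L}(\cG) = \mathbf{Spec}_S(\mathrm{Sym}\,\cG) \to S$ represents $F$ as an object of $\mathbf{An}_S$.

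The construction of such a $\cG$ is exactly what Grauert's coherence theorem, in the relative-analytic setting over the complex space $S$, provides, via the "cohomology and base change" complex: locally on $S$ there is a finite complex of free modules $0 \to \cO_S^{a_0} \xrightarrow{\alpha} \cO_S^{a_1} \to \cdots$ computing $R^\bullet p_* \cH$ universally after any base change, and one takes $\cG := \mathrm{coker}(\alpha^\vee)$, so that $Hom(f^*\cG, \cO_W) = \ker((\alpha^\vee)^\vee \text{ over } W) = H^0$ of the pulled-back complex $= H^0(X\times W, (id_X\times f)^*\cH)$ by base change. These local pieces glue because the universal property pins down $\mathbb{L}(\cG)$ uniquely. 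The main obstacle, and the only place requiring care, is invoking the base-change statement in the correct generality: $S$ is an arbitrary complex space (possibly non-reduced), $X$ is compact but one works in the analytic category, so I would cite the appropriate form of Grauert's theorem (the "Grauert--Riemenschneider" / "cohomology and base change" package for proper morphisms of complex spaces, e.g.\ as in Banica--Stanasila or Grauert's original papers). Once that is in hand, functoriality of the identification and the universal property of the linear fiber space give representability with essentially no further computation.
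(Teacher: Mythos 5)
Your treatment of (i) coincides with the paper's: both simply observe that $F$ acts on a morphism $\alpha$ over $S$ by the pullback $(id_X\times\alpha)^*$ of homomorphisms, and functoriality is formal. For (ii), however, you take a genuinely different route. The paper does not construct the representing space at all: it invokes the representability of the homomorphism functor as a known theorem, citing Grothendieck (EGA III 7.7.8--7.7.9) in the algebraic case and Douady (10.1, 10.2) in the analytic case, and then records the resulting universal object as a ``Fact''. You instead essentially reprove that cited theorem: you rewrite $Hom((id_X\times f)^*E,(id_X\times f)^*E')$ as $H^0\bigl(X\times W, (id_X\times f)^*(E^\vee\otimes E')\bigr)$, use Grauert's direct image theorem together with the cohomology-and-base-change description of $R^\bullet p_*$ by a finite complex of free modules (valid for the proper flat projection $X\times S\to S$ and the locally free sheaf $E^\vee\otimes E'$, also over non-reduced $S$), extract a coherent sheaf ${\mathcal G}=\mathrm{coker}(\alpha^\vee)$ with $Hom_{{\mathcal O}_W}(f^*{\mathcal G},{\mathcal O}_W)\cong H^0$ after any base change, and represent $F$ by the associated linear fiber space over $S$. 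This is precisely the mechanism behind the Grothendieck/Douady results, so your argument is correct and is in fact more informative: it exhibits the representing object $R\to S$ as a linear space over $S$, which the paper's citation does not make visible, at the cost of having to invoke the analytic base-change machinery in full (your reference should be Banica--St\u{a}n\u{a}\c{s}il\u{a} or Forster--Knorr/Kiehl--Verdier; ``Grauert--Riemenschneider'' is the vanishing theorem and is not the relevant statement). The paper's proof buys brevity and stays at the level of the classical theorem; yours buys self-containedness and extra structure on $R$, and for the application in the next subsection (cutting out the locus where the universal homomorphism is an isomorphism) either version suffices.
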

 
\begin{proof}
(i) Let $\alpha$ be a morphism between the objects $(f_1 : W_1 \to S)$ and $(f_2 : W_2 \to S)$ in the 
category $\mathbf{An}_S$. Then $\alpha$ is given by a holomorphic mapping described in the following diagram:
$$
\xymatrix{
W_1 \ar[r]^\alpha \ar[dr]_{f_1} & W_2\ar[d]^{f_2}\\ &S
}
$$
Let us set $E_{W_i}\,=\, (id_X\times f_i)^*E$ for $i\,=\,1,\,2$ and $E'_{W_i}\,=\, (id_X\times f_i)^*E'$ for $i\,=\,1,\,2$.

Let us denote by $\beta$ the following diagram $$
\quad
\xymatrix{E_{W_2} \ar[r]^\beta \ar[rd] & E'_{W_2} \ar[d]\\
& X \times W_2
}
$$

and notice that the image of $\beta$ through $F(\alpha)$ is given by:
$$
\quad
\xymatrix{E_{W_1} \ar[rr]^{(id_X\times \alpha)^*\beta} \ar[drr] && E'_{W_1} \ar[d]\\
&& X \times W_1&
}
$$

This implies that $F$ is a morphism of categories.

(ii) We shall use the representability of the morphism functor. In the algebraic case it was shown by 
Grothendieck in {\cite[EGAIII 7.7.8 and 7.7.9]{egaIII}}. In the analytic case the corresponding theorem for 
complex spaces is due to Douady \cite[10.1 and 10.2]{dou}. For coherent (locally free) sheaves $\cM$, and 
$\cN$ over a space $\cX\,\longrightarrow\, S$, one considers morphisms of the simple extensions 
$\cX[\cM]\,\longrightarrow\, \cX[\cN]$.

\begin{fact*}The functor $F$ is representable by a complex space $g\,:\,R\,\longrightarrow\, S$. There is a universal object $v$ in $F(g)$
$$
\upsilon:\quad\xymatrix{ E_{R} \ar[r] \ar[dr] & E'_{R}\ar[d]\\
 &X\times R
 }
 $$
 such that any other object over $\wt{R} \,\longrightarrow\, S$ is isomorphic to $\wt g^*\upsilon$, where $\wt{g}\,:\, \wt{R}
\,\longrightarrow\, R $ is a holomorphic map over $S$.
\end{fact*}

\end{proof}

\subsection{Application to holomorphic Cartan geometries}\label{su:appcage}

\begin{theorem}
Let $X$ be a compact complex manifold, $H\,<\,G$ connected complex Lie groups, and $(E_H,\,A)$ a
a holomorphic Cartan geometry on $X$ of type $(G,\,H)$.
Then $(E_H,A)$ possesses a semi-universal deformation.
\end{theorem}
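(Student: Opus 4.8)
The plan is to construct the semi-universal deformation of the Cartan geometry $(E_H,\,A)$ in two stages, following the structure already laid out for principal bundles. First, I would invoke the existence of a semi-universal deformation $\cE_H\,\longrightarrow\, X\times S$ of the principal $H$-bundle $E_H$ over a complex space germ $(S,\,s_0)$ (this is the classical result of Donaldson, recalled in Section \ref{se:supb}). Over $X\times S$ we then have the Atiyah bundle $At(\cE_H/S)$ relative to $S$, the adjoint bundle $ad(\cE_{H}\times_H G)$, and the relative tangent bundle $T_{\cE_H/S}$; restricting to the central fiber recovers the data of $(E_H,\,A)$. The role of $A$ is that of a section of a suitable $Hom$-bundle, namely a morphism $T_{\cE_{H}/S}\,\longrightarrow\,\cE_H\times\fg$ over $S$ satisfying the two pointwise normalization conditions (ii) of Definition \ref{de:Cartangeom), and more intrinsically a morphism $At(\cE_H/S)\,\longrightarrow\, ad(\cE_H\times_H G)$.

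Second, I would apply the representability statement of the previous subsection (the Fact following the Proposition in Section \ref{su:movb}) to the two vector bundles $E\,=\,At(\cE_H/S)$ and $E'\,=\,ad(\cE_H\times_H G)$ on $X\times S$. This produces a complex space $g\,:\,R\,\longrightarrow\, S$ representing the functor of families of homomorphisms, together with a universal morphism $\upsilon\,:\,E_R\,\longrightarrow\,E'_R$ over $X\times R$. A point of $R$ over $s\,\in\, S$ is exactly a bundle homomorphism $At(\cE_{H,s})\,\longrightarrow\,ad(\cE_{H,s}\times_H G)$; imposing that this homomorphism actually define a Cartan geometry — that is, that it be an isomorphism of vector bundles and satisfy the Maurer-Cartan normalization on the fibres of $E_H\to X$ — cuts out a locally closed analytic subspace $Z\,\hookrightarrow\, R$. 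Being an isomorphism is an open condition (nonvanishing of the relevant determinant section), and the Maurer-Cartan normalization on fibres is a closed analytic condition; hence $Z$ is a locally closed complex subspace of $R$, and we take for the parameter space of the deformation of $(E_H,\,A)$ the germ of $Z$ at the point $z_0$ determined by $A$ itself. The family $\cE_H|_{X\times Z}$ together with the restriction of $\upsilon$ gives a holomorphic family of Cartan geometries over $Z$, and the chosen isomorphism $\Phi$ over the central fibre comes from the corresponding data for $\cE_H$ at $s_0$.

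It then remains to check semi-universality, i.e.\ completeness and the correct tangent map. Completeness follows by composing the two universal properties: given any deformation $(\cE_{H,W},\,\cA_W,\,\Phi_W)$ of $(E_H,\,A)$ over $(W,\,w_0)$, the underlying deformation of $E_H$ yields, by completeness of $\cE_H$, a base-change map $W\,\longrightarrow\, S$ pulling $\cE_H$ back to $\cE_{H,W}$; under this identification $\cA_W$ becomes a family of homomorphisms $At(\cE_{H,W})\,\longrightarrow\, ad(\cE_{H,W}\times_H G)$, which by the universal property of $R$ is classified by a unique lift $W\,\longrightarrow\, R$, and since each $\cA_{W,w}$ defines a Cartan geometry this lift factors through $Z$. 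Pulling back the universal family over $Z$ recovers $(\cE_{H,W},\,\cA_W)$ up to the required isomorphism over the central fibre. For the tangent statement, one computes $T_{Z,z_0}$: by construction it sits in the exact sequence
$$
0\,\longrightarrow\, H^0(X,\,Hom(At(E_H),\,ad(E_G)))\,\longrightarrow\, T_{Z,z_0}\,\longrightarrow\, H^1(X,\,ad(E_H))_A\,\longrightarrow\, 0,
$$
matching the description of $T^1(E_H,\,A)$ in the Infinitesimal Deformations theorem, since the tangent space to the fibre of $Z\to S$ over $s_0$ is exactly the space of $H$-invariant morphisms $T_{E_H}\to E_H\times\fg$ (the first-order homomorphisms deforming $A$ with $E_H$ fixed), and the image in $H^1(X,\,ad(E_H))$ is precisely the subspace $H^1(X,\,ad(E_H))_A$ of deformations of $E_H$ to which $A$ extends, by Proposition \ref{pr:affine}.

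The main obstacle I expect is making precise the passage between the local cocycle description used in Propositions \ref{pr:infdef} and \ref{pr:affine} and the global relative picture over $X\times S$: one must verify that "being a Cartan geometry" is genuinely a locally closed analytic condition on the representing space $R$ — in particular that the Maurer-Cartan condition on the fibres of $\pi\,:\,E_H\to X$ can be phrased as the vanishing of a coherent sheaf map, so that it defines a closed complex subspace — and that the resulting germ $Z$ has the expected Zariski tangent space without spurious components. Handling the normalization conditions (i)–(ii) of Definition \ref{de:Cartangeom} in families, and checking that $H$-equivariance is automatically preserved under the base changes above (so that $Z$ really parametrizes Cartan geometries and not merely bundle maps), is where the care is needed; once that is in place, the two universal properties compose formally to give semi-universality.
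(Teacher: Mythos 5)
Your proposal follows the paper's own proof essentially step for step: take a semi-universal deformation of the principal bundle $E_H$ over $X\times S$, apply the representability of the homomorphism functor to the bundles $At(\cE_H)$ and $ad(\cE_G)$ to get $R\,\longrightarrow\, S$ with its universal morphism, locate the point $r_0$ given by $A_H$, and restrict near $r_0$; completeness then comes from composing the two universal properties, exactly as you describe. The only deviation is one of care rather than method: you explicitly cut out the closed analytic locus where the fibrewise Maurer--Cartan normalization holds (together with the open isomorphism locus), whereas the paper only restricts to the open neighbourhood where the universal morphism is an isomorphism and leaves that closed condition implicit -- a worthwhile refinement, not a different route.
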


\begin{proof}
We begin with a semi-universal deformation of $E_H$:
$$
\xymatrix{
E_H \ar[r]^\sim \ar[dr]& \cE_H|X\times\{s_0\} \ar@{^{(}->}[r]\ar[d] & \cE_H \ar[d]\\
& X\times \{s_0\}\ar@{^{(}->}[r] & X\times S\; ,
}
$$
and look at the holomorphic vector bundles
$$
\xymatrix{
At(\cE_H) \ar[dr] && ad(\cE_G) \ar[dl]\\
&X\times S&
}\;
$$
to which we apply the representability of the homomorphism functor. We have the following universal homomorphism $\eta$. (It can be checked easily that $At$ and $ad$ commute with base change).
$$
\xymatrix{At(\cE_{H,R}) \ar[rr]^\eta \ar[dr] \ar@/^2pc/[rrr] && ad(\cE_{G,R})\ar@/^2pc/[rrr]\ar[dl]& At(\cE_H)\ar[dr] && ad(\cE_G)\ar[dl]\\
&X\times R \ar[rrr]_{id_X\times h}\ar[d]&&&X\times S\ar[d]&\\
& R \ar[rrr]_h &&& S
}
$$
Now the holomorphic Cartan geometry $A_H\,:\, At(E_H)\,\isom\, ad(E_G)$ amounts to a point $r_0\,\in\,
R$, which is mapped to $s_0$ under $R\,\longrightarrow\, S$ (observing the deformation theoretic isomorphisms of the
given objects and distinguished fibers of the semi-universal families). We take the connected component of $R$ through
$r_0$ and restrict it to an open neighborhood, where the universal morphism is an isomorphism. Going through the
construction, we see that the restricted family yields a semi-universal deformation of the holomorphic \cage\ $(E_H,A)$.
\end{proof}

\section*{Acknowledgements}

We thank the referee for helpful comments to improve the exposition. The first-named author thanks
Universit\'e C\^ote d'Azur and Philipps-Universit\"at Marburg for their hospitality while this
was carried out. He is partially supported by a J. C. Bose Fellowship.


\begin{thebibliography}{ZZZZZ}

\bibitem[At]{At} Atiyah, M. F.: \textit{Complex analytic connections in fibre
bundles,} {Trans.\ Amer.\ Math.\ Soc.} \textbf{85} (1957), 181--207.

\bibitem[BD1]{BD} Biswas, I. and Dumitrescu, S.: \textit{Branched Holomorphic Cartan Geometries and
Calabi--Yau manifolds}, Inter. Math. Res. Not. (to appear), arXiv:1706.04407.

\bibitem[BD2]{BD1} Biswas, I. and Dumitrescu, S., \textit{Generalized holomorphic Cartan geometries}, arxiv.org/abs/1902.06652, to be published in European Journ. Math., DOI: 10.1007/s40879-019-00327-6

\bibitem[BDS]{BDS} Biswas, I., Dumitrescu, S. and Schumacher, G.: \textit{Branched holomorphic Cartan
geometry on Sasakian manifolds}, Adv. Theo. Math. Phys. (to appear).

\bibitem[BHH]{BHH} Biswas, I., Heu, V. and Hurtubise, J.: \textit{Isomonodromic deformations of
logarithmic connections and stability}, Math. Ann. {\bf 366} (2016), 121--140.

\bibitem[BSch]{bsch07} Biswas, I. and Schumacher, G.: \textit{Kähler structure on moduli
spaces of principal bundles}, Differ.\ Geom.\ Appl. {\bf 25} 136--146 (2007).

\bibitem[Don]{Don} Donin, I. F.: \textit{Construction of a versal family of deformations for
holomorphic bundles over a compact complex space}, Math. USSR Sb. {\bf 23}
(1974), 405--416.

\bibitem[Dou]{dou} Douady, A.: \textit{Le problème des modules pour les sous-espaces
analytiques compacts d’un espace analytique donné,} Ann.\ Inst.\ Fourier {\bf 16} (1966),
1--95.

\bibitem[Eh]{Eh} Ehresmann, C.: \textit{ Sur les espaces localement homog\`enes},
L'Enseign. Math. \textbf{35} (1936), 317--333.

\bibitem[Go]{Go} Goldmann, W. : \textit{Sur les espaces localement
homog\`enes}, Proc. of ICM, Hyderabad, India, (2010).

\bibitem[GM]{GM} Goldmann, W. and Millson J.: \textit{The deformation theory of representations
of fundamental groups of compact K\"ahler manifolds}, Inst. Hautes \'Etudes Sci. Publ. Math.
{\bf 67} (1988), 43--96.

\bibitem[Gh]{Gh} Ghys, E.: \textit{D\'eformations des structures complexes sur les espaces
homog\`enes de $SL(2, \mathbb{C})$}, Jour. Reine Angew. Math. \textbf{468}
(1995), 113--138.

\bibitem[Gr]{egaIII} Grothendieck, A.: \textit{Élements de géometrie algebrique III,
Étude cohomologique des faisceaux cohérents.} Publications Mathématiques, No.\ {\bf 11},
Paris, 1961.

\bibitem[Gu]{Gu} Gunning, R.C.: \textit{On uniformization of complex manifolds: the
role of connections}, Princeton Univ. Press, 1978.

\bibitem[IKO]{IKO} Inoue, M., Kobayashi, S. and Ochiai T.: \textit{Holomorphic affine
connections on compact complex surfaces}, J. Fac. Sci. 
Univ. Tokyo {\bf 27} (1980), 247--264.

\bibitem[KO1]{KO1} Kobayashi, S. and Ochiai T.: \textit{Holomorphic projective structures
on compact complex surfaces}, Math. Ann. {\bf 249} (1980), 75--94.

\bibitem[KO2]{KO2} Kobayashi, S. and Ochiai T.: \textit{Holomorphic projective
structures on compact complex surfaces II}, Math. Ann. {\bf 255}, (1981), 519--521.

\bibitem[Ra]{Ra} Ragunathan, M.S.: \textit{Vanishing theorems for cohomology groups associate to discrete 
subgroups of semi-simple Lie groups}, Osaka Math. Jour. {\bf 3} (1966), 243--256, corrections ibid. {\bf 
16}, (1979), 295--299.

\bibitem[Sh]{Sh} Sharpe, R. W.: {\it Differential geometry. Cartan's generalization of
Klein's Erlangen program}, Graduate Texts in Mathematics, 166. Springer-Verlag, New York,
1997.

\bibitem[StG]{StG} de Saint Gervais, H.P.:\textit{ Uniformization of Riemann Surfaces.
Revisiting a hundred year old theorem}, E.M.S., 2016.

\end{thebibliography}
\end{document}